\newcommand{\R}{{\mathbb R}}
\newcommand{\Set}{{\mathcal S}}
\newtheorem{theorem}{Theorem}
\newtheorem{proposition}[theorem]{Proposition}
\newtheorem{corollary}[theorem]{Corollary}
\newtheorem{lemma}[theorem]{Lemma}
\theoremstyle{definition}
\newtheorem{definition}[theorem]{Definition}
\title[Bachet's Weights Problem]
{Bachet's Problem: as few weights to weigh them all}
\author{Edwin O'Shea}
\thanks{Supported by Science Foundation Ireland's Mathematics Initiative.}
\address{School of Mathematical Sciences, University College, Cork, Ireland}
\email{oshea.edwin@gmail.com}
\date{\today}
\begin{document}

\maketitle


\vspace{-.1in}

The genesis of many areas in mathematics can often be found in some simply-put puzzle, a word problem 
that doesn't require any formal language or concise definitions to understand. 
A few cases in point are graph theory having its origins in Euler's 
{\em Bridges of K{\" o}nigsberg problem}, the {\em Chinese 
remainder problem} which best captures the rules of modular arithmetic in number 
theory \& abstract algebra, and the dark arts of probability having their roots in 
17$^{\textup{th}}$-century games of chance. Generalizations of these problems form 
the bedrock for much of what came afterward. 

In other subjects, progress is made instead with the root problems 
leading to others but without those root problems ever being solved. This is the case with 
number theory's first problems like Goldbach's conjecture and the twin primes conjecture. 
But it can also be the case that the first problem of a modern and active area of 
mathematics can simply be forgotten as that, even if the problem enjoys an enduring popularity 
both within and outside the classroom. This is certainly the case with the problem that we will 
generalize here and which we argue should be regarded as one of the first problems, if not the 
first, of the thoroughly modern area of integer partitions: 

\begin{center}
{\em What is the least number of pound weights that can be used on a scale pan to 
weigh any integral number of pounds from 1 to 40 inclusive, if the weights 
can be placed in either of the scale pans~?} 
\end{center}

W.W. Rouse Ball \cite[pp.50]{Bal} attributes the first recording of this problem 
to Bachet in the early 17th century, calling it {\em Bachet's Weights Problem}, and 
Hardy \& Wright thought it fit to include it in their wonderful and highly influential 
{\em An introduction to the theory of numbers} \cite{HarWri}. 
However, Bachet's problem, as noted by Knobloch~\cite{Kno}, stretches all the way back to 
Fibonacci \cite[{\em On IIII Weights Weighing Forty Pounds}]{Fib} in 1202! 

\parbox{3.2in}{\includegraphics[scale=0.50]{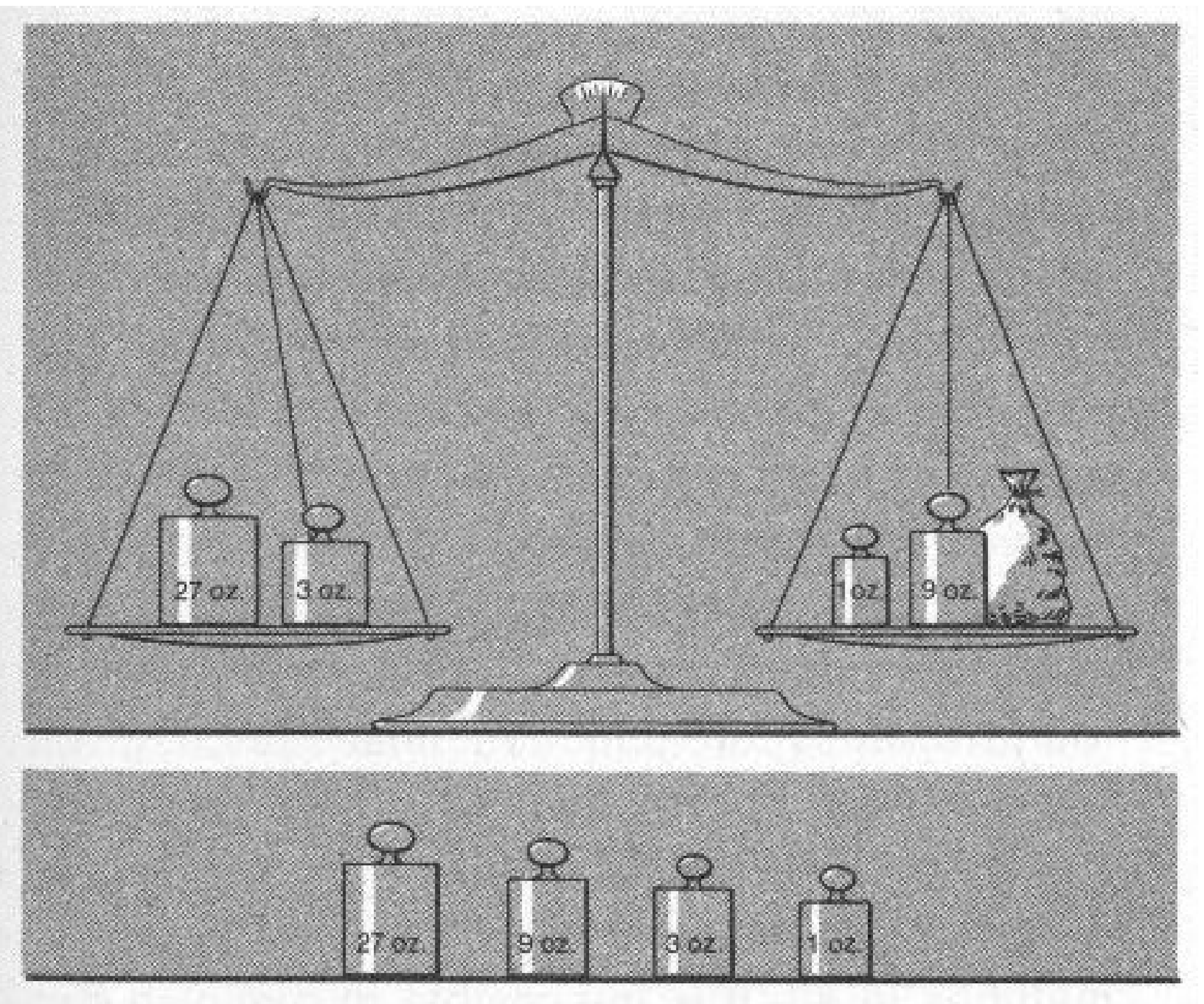}}
\parbox{2.8in}{\includegraphics[scale=0.62]{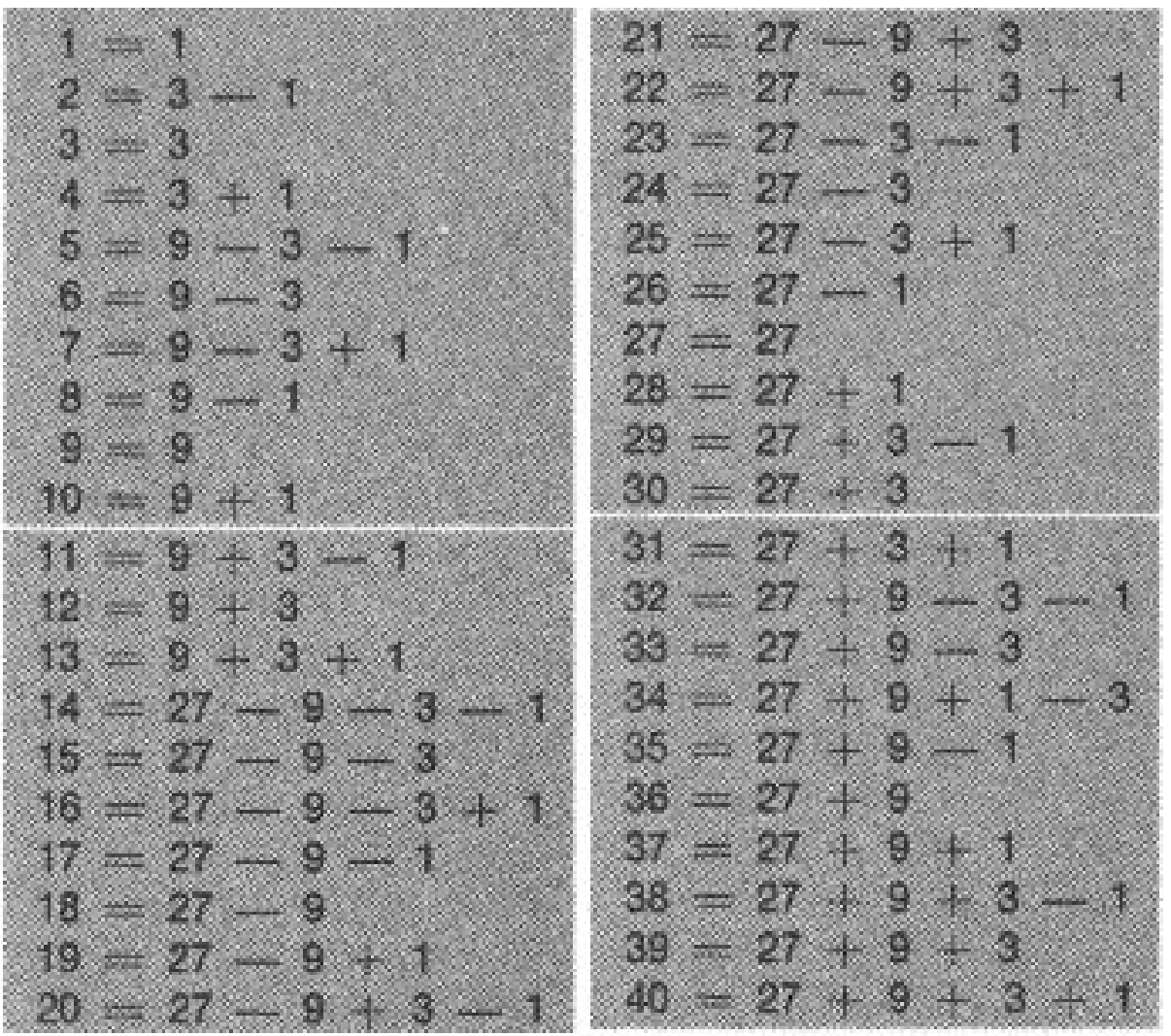}}

Bachet's problem needs no more than four weights and these (unique) pound 
weights are 1,3,9 and 27. The figure \cite[pp. 53]{Ste} displays how to weigh 20 
(Steinhaus \cite{Ste} had the good sense to only lift ounce rather than pound weights onto the page) 
and the table 
(also from \cite[pp. 53]{Ste}) displays how to measure all the weights between 1 and 40 
inclusive, a positive coefficient assigned to weights placed on the left scale, a negative 
to those on the right. Writing the solution as an integer partition with four parts 
$40=1+3+9+27$, Bachet's problem's noble roots in {\em Fibonacci's Liber Abaci} \cite{Fib} 
make it a viable candidate for the first problem of integer partitions. 

Until relatively recently the only known generalizations of this problem were that of 
replacing $40$ with integers of the form $\frac{1}{2}(3^{n+1}-1)$ \cite[\S 9.7]{HarWri} 
and the appropriate partition, as we might guess at this juncture, being powers of $3$. 
This has received some practical attention in economics \cite{Tesler} as it provides optimal 
denominations of coins and currency. However, a retort to this \cite{VanHove} is that in 
our common decimal system not everyone can think quickly in ternary.

The {\em generalized Bachet's problem} that we will explore here is that of finding 
appropriate weights when one replaces $40$ with any positive integer. 
The full generalization, due to Park \cite{Par} and studied further by R{\o}dseth \cite{Rod}, 
not only tells us the {\em minimum number of parts} needed when $40$ is replaced by any $m$ but 
{\em all possible ways to accordingly break up} a given $m$. Furthermore, 
we can also {\em count} the number of distinct ways to break up such an $m$. 
For example, when we replace $40$ by $m=25$ 
we'll still need no more than four parts but there are now nine ways to break up $25$ to solve 
Bachet's problem. Written as partitions with four parts, these are: 
$$
\begin{array}{cccccccccc}
 25 & & = & 1+3+9+12 & & = & 1+3+8+13 & & = & 1+3+7+14 \,  \\
    & & = & 1+3+6+15 & & = & 1+3+5+16 & & = & 1+3+4+17 \,  \\
    & & = & 1+2+7+15 & & = & 1+2+6+16 & & = & 1+2+5+17 
\end{array}
$$
Remarkably, given the age and popularity of Bachet's problem, these headways have 
come to light only in the last fifteen or so years and they seem to be 
little known at that.  Given its status as one of the first problems of partitions 
of integers, we aim to rectify this sad state of affairs 
and to do so in a lively and informal yet unambiguous fashion, using only our sharp wits 
and a willingness to induct! We also hope to introduce impressionable readers to some of 
the wonders of partitions of integers, recurrence relations, generating functions and 
counting integer points in polyhedra.

We will also expound on similar problems like the following: 
{\em what is the least number of pound weights that can be used on a scale pan to 
weigh any integral number of pounds from 1 to 15 inclusive, if the weights 
can be placed in only one of the scale pans~?}  Finally, we will close with MacMahon's 
generalization of (the two-scale) Bachet's problem: he noticed \cite{MacQuar} 
that $1,3,9,27$ can be used to {\em uniquely} weigh every integer weight between 
$1$ and $40$. For example, the figure displays that $20 = -1+3-9+27$ and we claim, 
in the sense of Bachet, that this is the only way to write $20$ using $1,3,9$ and $27$.
We will see what the factorization $3 \times 3 \times3 \times 3$ 
of $81$ has to do with the weight set $1,3,9,27$ for $40$.

\section*{A First Solution to the Generalized Bachet's Problem}
Before becoming a touch more formal, let's provide a taster of what's to come by 
providing our first candidates, one candidate of mostly ternary weights for each positive 
integer $m$, to solve the generalized Bachet's problem. 
Given a positive integer $m$ there is a unique integer $n$ such that 
$\frac{1}{2}(3^n-1)+1 \leq m \leq \frac{1}{2}(3^{n+1}-1)$. We can 
break the integer $m$ into $n+1$ smaller integer weights consisting of 
those elements in the multi-set 
${\mathcal W}_m := \{ 1,3,3^2,\ldots,3^{n-1}, m - (1+3+3^2+ \cdots +3^{n-1})\}$. 
For example, if $m=25$ then 
$$
14 = \frac{1}{2}(3^3-1)+1 \leq 25 \leq \frac{1}{2}(3^{3+1}-1) = 40 
\, \, \textup{and} \, \, 
{\mathcal W}_{25} = \{ 1,3,9, 25 - (1+3+9) \} = \{ 1,3,9,12 \}.
$$ 
\begin{proposition} \label{pro:first}
Every integer weight $l$ with $0 \leq l \leq m$ can be measured using a two scale 
balance with the weights from the multiset ${\mathcal W}_m$.
\end{proposition}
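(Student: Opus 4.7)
The plan is to reduce the proposition to the classical balanced ternary fact: with weights $\{1,3,3^2,\ldots,3^{n-1}\}$ on a two-pan balance, every integer $l'$ with $|l'|\leq s$ can be weighed, where $s:=1+3+\cdots+3^{n-1}=\tfrac{1}{2}(3^n-1)$. Equivalently, every such $l'$ admits an expression $l'=\sum_{i=0}^{n-1}\epsilon_i 3^i$ with $\epsilon_i\in\{-1,0,1\}$. I would prove this lemma first, by induction on $n$: the base case $n=1$ handles $l'\in\{-1,0,1\}$ trivially; for the inductive step, pick $\epsilon_0\in\{-1,0,1\}$ so that $l'\equiv\epsilon_0\pmod 3$, then observe $(l'-\epsilon_0)/3$ is an integer whose absolute value is at most $\tfrac{1}{2}(3^{n-1}-1)$, and apply the inductive hypothesis.

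With that lemma in hand, set $w:=m-(1+3+\cdots+3^{n-1})=m-s$, so $\mathcal{W}_m=\{1,3,\ldots,3^{n-1},w\}$. The choice of $n$ in the hypothesis translates to $s+1\leq m\leq 3s+1$, hence $1\leq w\leq 2s+1$. Given $l$ with $0\leq l\leq m$, I would split into two cases depending on whether or not the extra weight $w$ is placed in a pan.

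If $0\leq l\leq s$, the lemma applied directly to $l'=l$ expresses $l$ using only the ternary weights $\{1,3,\ldots,3^{n-1}\}$, leaving $w$ off the balance. If instead $s<l\leq m$, consider $l':=l-w$. The upper bound gives $l'\leq m-w=s$, and the lower bound together with $w\leq 2s+1$ gives $l'\geq (s+1)-(2s+1)=-s$. Thus $|l'|\leq s$, so the lemma represents $l'$ using the ternary weights; adding $w$ to the ``light'' pan then realises $l=l'+w$.

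The only substantive obstacle is the bound check in the second case, and this is precisely where the specific choice of $n$ matters. Had $m$ exceeded $3s+1$, then $w$ would exceed $2s+1$ and the interval $[w-s,w]$ would no longer be covered by the ternary weights supplemented by $w$, leaving a gap of unweighable integers. The chosen $n$ is exactly the one that makes the two reachable intervals $[-s,s]$ and $[w-s,w+s]$ overlap (or meet), so that together with $[0,m]\subseteq[-s,s]\cup[w-s,w]$ every target weight is attainable.
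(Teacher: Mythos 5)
Your proof is correct, and its core idea is the same as the paper's: the pure ternary weights $1,3,\ldots,3^{n-1}$ cover the symmetric interval $[-s,s]$ with $s=\tfrac{1}{2}(3^n-1)$, the extra weight $w=m-s$ shifts this to cover $[w-s,\,w+s]$, and the choice of $n$ (equivalently $w\leq 2s+1$) guarantees the two intervals meet so that $[0,m]$ is covered. Where you genuinely diverge is in how the key sub-fact is established. The paper runs a single strong induction on $n$ over all $m\leq\tfrac{1}{2}(3^{n+1}-1)$, so the statement ``$\{1,3,\ldots,3^{n-1}\}$ weighs all of $[-s,s]$'' is simply the instance $m=\tfrac{1}{2}(3^n-1)$ of its own inductive hypothesis and never needs a separate proof. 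You instead isolate it as a standalone balanced-ternary lemma and prove it by a different induction, peeling off the low-order digit $\epsilon_0\equiv l'\pmod 3$. That buys modularity and connects the proposition explicitly to balanced ternary representation, at the cost of one step that deserves more care than your word ``observe'': the crude bound $|l'-\epsilon_0|\leq s+1$ gives only $|(l'-\epsilon_0)/3|\leq\tfrac{1}{6}(3^n+1)$, which exceeds $\tfrac{1}{2}(3^{n-1}-1)$; the claimed bound is nevertheless true because $l'-\epsilon_0$ is a multiple of $3$ while neither $s$ nor $s+1$ is (both are $\not\equiv 0\pmod 3$), so in fact $|l'-\epsilon_0|\leq s-1=3\cdot\tfrac{1}{2}(3^{n-1}-1)$. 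You should include that line. Finally, the expression $[-s,s]\cup[w-s,w]$ in your last sentence is a slip for $[-s,s]\cup[w-s,w+s]$, as your own case analysis makes clear.
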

To see that this is true in the case of $m=25$ observe that every integer in the closed 
interval $[-13,13]$ can be measured using $\{1,3,9\}$. With the extra weight of $12$ we
can in addition measure every integer in the shifted closed interval $12+[-13,13] = [-1,25]$ 
and so the proposition holds for $m=25$. Let's prove it now for every $m$.
\begin{proof}
For $m=1,2,3,4$ (those $m$'s with $n = 0$ or $1$) we have ${\mathcal W}_1 = \{ 1 \}$, 
${\mathcal W}_2 = \{ 1, 1 \}$, ${\mathcal W}_3 = \{ 1, 2 \}$ and ${\mathcal W}_4 = \{ 1, 3 \}$ 
respectively and, for every such $m$, every $0 \leq l \leq m$ can be measured using both pans 
of the two scale balance with the weights in ${\mathcal W}_m$. 
Assume that this is the case for every $m \leq \frac{1}{2}(3^{n}-1)$. 
In particular, assume that 
${\mathcal W}_{\frac{1}{2}(3^{n}-1)} := \{ 1,3,3^2,\ldots,3^{n-1} \}$ 
can be used to weigh every integer $l$ in the closed interval 
$[-\frac{1}{2}(3^{n}-1), \frac{1}{2}(3^{n}-1)]$ -- thinking in terms of the 
two scale pans, a negative $-l$ would have the weights on the scales interchanged 
from that of the positive $l$. 

We will now proceed by induction on $n$ to show that every 
$l \leq m$ can be measured (using both pans of the two scale balance) with the 
weights in ${\mathcal W}_m$ for all $m$'s with 
$\frac{1}{2}(3^n-1)+1 \leq m \leq \frac{1}{2}(3^{n+1}-1)$. 
Since the multiset ${\mathcal W}_{\frac{1}{2}(3^{n}-1)}$ is contained 
in ${\mathcal W}_m$ then, by our inductive hypothesis, every integer 
in the closed interval $[-\frac{1}{2}(3^{n}-1), \frac{1}{2}(3^{n}-1)]$ 
can be measured by using weights from 
${\mathcal W}_m \backslash \{ m - \frac{1}{2}(3^n-1) \}$ on the two scale balance. 
Consequently, every integer weight in the following closed interval 
can be measured using ${\mathcal W}_m$:
$$
m - \frac{1}{2}(3^n-1) \, + \, [-\frac{1}{2}(3^{n}-1), \, \frac{1}{2}(3^{n}-1)] 
\, = \, [m - 3^n + 1, \, m].
$$
When combined with our induction hypothesis, this implies that all integers in the 
union of the closed intervals $[0, \, \frac{1}{2}(3^{n}-1)] \, \cup \, [m - 3^n + 1, \, m]$ 
can be measured using ${\mathcal W}_m$. Now recall that 
$m \leq \frac{1}{2}(3^{n+1}-1)$ which implies that 
$$
m-3^n+1 \, \leq \, \frac{1}{2}(3^{n+1}-1) - 3^n + 1 \, =  \, \frac{1}{2}(3^{n}-1) + 1
$$ 
and so the integers in the set $[0, \, \frac{1}{2}(3^{n}-1)] \, \cup \, [m - 3^n + 1, \, m]$ 
are precisely those integers in the set $[0,m]$. In other words, every integer weight 
$l$ with $0 \leq l \leq m$ can be measured using a two scale balance with the weights from 
${\mathcal W}_m$.
\end{proof}

In the case of $m = \frac{1}{2}(3^{n+1}-1)$, the above proposition was intimated by 
Fibonacci in \cite[{\em On IIII Weights Weighing Forty Pounds}]{Fib} and first proved 
by Hardy \& Wright \cite[\S 9.7]{HarWri} 
who went further by showing that ${\mathcal W}_{\frac{1}{2}(3^{n+1}-1)}$ 
is not only the smallest multiset of weights that satisfy the Bachet problem for 
$m = \frac{1}{2}(3^{n+1}-1)$ but that it is the unique such multiset. 

In the next sections, we will see that ${\mathcal W}_m$ is a multiset of {\em minimal} 
size with the property that every weight between $0$ and $m$ can be measured using a 
two scale balance. From this analysis Hardy \& Wright's 
claim of ${\mathcal W}_{\frac{1}{2}(3^{n+1}-1)}$ being the unique such multiset will follow.
But in order to do so we will need first to delve into the language of partitions of integers.

\section*{Partitions of Integers}
Luckily for us, the description of all solutions to the generalized Bachet problem 
is surprisingly elegant and simple when phrased in terms of  
{\em partitions of integers}. 
A {\em partition} of a positive integer $m$ is an ordered sequence of positive integers 
that sum to $m$: $m=\lambda_0 + \lambda_1 + \lambda_2 + \cdots + \lambda_n$ with 
$\lambda_0 \leq  \lambda_1 \leq \lambda_2 \leq \cdots \leq \lambda_n$. We call 
the $n+1$ $\lambda_i$'s the {\em parts} of the above partition. For example, $5$ has seven 
distinct partitions given by 
$$
5 \, = \, 1+1+1+1+1 \, = \, 1+1+1+2 \, = \, 1+2+2 \, = \, 1+1+3 \, =  1+4 \, \, = \, 2+3
$$
and we denote this by $p(5)=7$. Analogous to the hand-shaking 
lemma in graph theory, the first lemma that everyone encounters in integer partitions is: 
the number of partitions of a given $m$ 
with no parts larger than $n+1$ equals the number of partitions of $m$ with at most $n+1$ parts. For $m=5$ 
and $n+1=2$ this translates to $|\{1+1+1+1+1, \, 1+1+1+2, \, 1+2+2 \}| = |\{ 5, \, 1+4, \, 2+3 \}|$. 
See \cite{AndEri} for a first introduction to integer partitions and \cite{And} for a more advanced 
perspective. 

Returning to Bachet's problem, let's call a partition of $m$ a {\em Bachet partition} if
\begin{center}
(1) every integer $0 \leq l \leq m$ can be written as $l = \sum_{i=0}^n{\beta_i \lambda_i}$ 
where each $\beta_i \in \{-1,0,1\}$
\end{center} 
and (2) there does not exist another partition of $m$ satisfying (1) 
with fewer parts than $n+1$. 

For example, only four of the seven partitions of $5$ satisfy condition (1): 
$\{ 1+1+1+1+1, \, 1+1+1+2, \, 1+2+2, \, 1+1+3 \}$.
And of these four partitions 
only two have the fewest possible number of three parts: $\{ 1+2+2, \, 1+1+3 \}$. 
In short, $5$ has two Bachet partitions. 

Another example is the partition $1 + 3 + 9 + 12$ of $25$, whose parts are 
precisely the elements of ${\mathcal W}_{25}$. 
Proposition~\ref{pro:first} above amounts to saying that this partition 
satisfies condition (1). What remains to be shown is whether this partition satisfies (2). 
We could of course list all $p(25) = 1958$ partitions of $25$ \cite[A000041]{Slo} and check which 
of those satisfy (1). And then pick out those with the fewest number of parts just as we did 
above for finding the Bachet partitions of $5$. But with some simple observations about condition 
(1) above we'll soon be able to do much better than this brute-force, tedious computation.

Noting that condition (1) above involves positive and negative $\beta_i$ coefficients 
it can be beneficial to only have to worry about addition and to do so we can 
rewrite condition (1) as: 
\begin{center}
(1)$^\prime$ every integer $0 \leq l \leq 2m$ can be written as $l = \sum_{i=0}^n{\alpha_i \lambda_i}$ 
where each $\alpha_i \in \{0,1,2\}$.
 \end{center}
The equivalence of conditions (1) \& (1)$^\prime$, as essentially noted by Hardy \& Wright in \cite[\S 9.7]{HarWri}, 
is given by the shift of $m = \lambda_0 + \lambda_1 + \cdots + \lambda_n$ in 
$l-m = \sum_{i=0}^n{\alpha_i \lambda_i} - \sum_{i=0}^n{\lambda_i} = \sum_{i=0}^n{\beta_i \lambda_i}$. 
Note that we could just as easily have replaced $0 \leq l \leq m$ in (1) with 
$-m \leq l \leq m$ since, thinking in terms of the two scales, a negative $-l$ would have the 
weights on the scales interchanged from that of the positive $l$.

Partitions of an integer $m$ satisfying (1)$^\prime$ are called {\em $2$-complete partitions} 
and were introduced by Park \cite{Par} as recently as 1998. 
This shift between conditions (1) and (1)$^\prime$ is little more than a sleight of hand but 
it does resolve the central difficulty in dealing with (1), in that it avoids having to deal with 
both addition \& subtraction operations, whereas (1)$^\prime$ involves only addition. 
We'll see in the next section that condition (1)$^\prime$ immediately tells us that $\lambda_0 = 1$ 
but this is not as obvious when using only (1). Much more will also become transparent from this 
formulation in the next section where we resolve (2), the minimality of parts condition. 

\section*{The Minimality of Parts Condition}

A simple equivalence regarding the $2$-complete partitions, first proved by Park, 
will amazingly tell us all that we need to know about Bachet partitions. We'll 
deal first with the minimality condition (2).
\begin{lemma} \cite{Par} \label{lem:suff}
If $m=\lambda_0 + \lambda_1 + \lambda_2 + \cdots + \lambda_n$ is a $2$-complete partition 
then $\lambda_0 = 1$ and 
$\lambda_i \leq 1 + 2(\lambda_0 + \lambda_1 + \cdots + \lambda_{i-1})$ 
for every $i = 1,2,\ldots, n$.
\end{lemma}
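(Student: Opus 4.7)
The plan is to establish both conclusions directly from the definition of a $2$-complete partition, using the representability of specific small values $l$ in $[0,2m]$.

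First I would prove $\lambda_0 = 1$. Since the partition is $2$-complete, the value $l=1$ admits a representation $1 = \sum_{i=0}^n \alpha_i \lambda_i$ with $\alpha_i \in \{0,1,2\}$. Because every $\lambda_i$ is a positive integer, the only way the right-hand side can equal $1$ is to have exactly one $\alpha_i$ equal to $1$ with the corresponding $\lambda_i = 1$ (all other $\alpha_j = 0$). Hence some part equals $1$, and since $\lambda_0$ is the smallest part, $\lambda_0 = 1$.

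Next I would prove the inequality $\lambda_i \leq 1 + 2(\lambda_0 + \lambda_1 + \cdots + \lambda_{i-1})$ by contradiction. Fix $i \in \{1,\ldots,n\}$ and suppose instead that
\[
\lambda_i \; > \; 1 + 2(\lambda_0 + \lambda_1 + \cdots + \lambda_{i-1}).
\]
Set $l := 1 + 2(\lambda_0 + \cdots + \lambda_{i-1})$. Note that $1 \leq l \leq 2m-1$, so by $2$-completeness there exist $\alpha_0,\ldots,\alpha_n \in \{0,1,2\}$ with $l = \sum_{j=0}^n \alpha_j \lambda_j$. The key observation is that, because the parts are nondecreasing, $\lambda_j \geq \lambda_i > l$ for all $j \geq i$, so in fact $\alpha_j = 0$ for every $j \geq i$. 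Consequently
\[
l \; = \; \sum_{j=0}^{i-1} \alpha_j \lambda_j \; \leq \; 2(\lambda_0 + \lambda_1 + \cdots + \lambda_{i-1}) \; = \; l - 1,
\]
which is absurd. Therefore the stated inequality must hold for every $i$.

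There is no real obstacle here: both claims reduce to the simple pigeonhole-style fact that, when the target value $l$ is strictly smaller than every remaining part, only the parts $\lambda_0,\ldots,\lambda_{i-1}$ are available as summands, and the maximum achievable with coefficients in $\{0,1,2\}$ is $2(\lambda_0 + \cdots + \lambda_{i-1})$. The only mild subtlety is choosing the right witness $l$; the value $l = 1 + 2(\lambda_0 + \cdots + \lambda_{i-1})$ is designed precisely to sit one unit above that maximum while still lying below $\lambda_i$ under the negation of the desired inequality.
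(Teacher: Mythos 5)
Your proof is correct and uses essentially the same idea as the paper: a target value strictly smaller than $\lambda_i$ can only be represented using the parts $\lambda_0,\ldots,\lambda_{i-1}$, whose maximal $\{0,1,2\}$-combination is $2(\lambda_0+\cdots+\lambda_{i-1})$. The paper argues directly with the witness $\lambda_i-1$ while you argue by contradiction with the witness $1+2(\lambda_0+\cdots+\lambda_{i-1})$, but this is only a contrapositive rephrasing of the identical bound.
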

\begin{proof}
Since $0 \leq 1 \leq 2m$ then we must be able to write $1$ as a $\{0,1,2\}$-combination 
of the parts $\lambda_0 \leq  \lambda_1 \leq \lambda_2 \leq \cdots \leq \lambda_n$ 
and if $\lambda_0 \geq 2$ then such a $\{0,1,2\}$-combination of the parts would be impossible.
Hence, $\lambda_0 = 1$ as claimed.

Consider next, for each $i = 1,\ldots,n$, the non-negative integer $\lambda_i -1$. 
Since $\lambda_i -1 < \lambda_i \leq \ldots \leq \lambda_n$, and since 
$m=\lambda_0 + \lambda_1 + \lambda_2 + \cdots + \lambda_n$ is a $2$-complete partition, 
then there must exist a $\{0,1,2\}$-combination of the parts 
$\lambda_0,\lambda_1,\ldots,\lambda_{i-1}$ that equals $\lambda_i -1$. 
Hence $\lambda_i -1$ cannot exceed the largest of all 
$\{0,1,2\}$-combinations of $\lambda_0,\lambda_1,\ldots,\lambda_{i-1}$, which 
would be $2\lambda_0 + 2\lambda_1 + \cdots + 2\lambda_{i-1}$. In other words, 
$\lambda_i \leq 1 + 2(\lambda_0 + \lambda_1 + \cdots + \lambda_{i-1})$ as claimed. 
\end{proof}

\begin{corollary} \label{cor:3s}
If $m=\lambda_0 + \lambda_1 + \lambda_2 + \cdots + \lambda_n$ is a $2$-complete partition 
then $\lambda_i \leq 3^i$ for every $i=0,1,\ldots, n$.
\end{corollary}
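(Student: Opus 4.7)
The plan is to prove this by induction on $i$, using Lemma~\ref{lem:suff} as the engine and the standard identity $1+3+3^2+\cdots+3^{i-1} = \tfrac{1}{2}(3^i-1)$ as the bookkeeping trick.

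For the base case $i=0$, Lemma~\ref{lem:suff} already delivers $\lambda_0 = 1 = 3^0$, so there is nothing to do. For the inductive step, assume $\lambda_j \leq 3^j$ for every $j=0,1,\ldots,i-1$. Then by Lemma~\ref{lem:suff},
$$
\lambda_i \;\leq\; 1 + 2(\lambda_0 + \lambda_1 + \cdots + \lambda_{i-1}) \;\leq\; 1 + 2(3^0 + 3^1 + \cdots + 3^{i-1}) \;=\; 1 + 2 \cdot \tfrac{1}{2}(3^i - 1) \;=\; 3^i,
$$
which closes the induction.

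There is really no obstacle here: the whole point of the upper bound $\lambda_i \leq 1 + 2(\lambda_0 + \cdots + \lambda_{i-1})$ in the previous lemma is that it is \emph{exactly} the recurrence whose termwise-maximal solution is the geometric progression $1, 3, 9, 27, \ldots$, and the telescoping of the ternary geometric sum converts the recursive bound into the closed-form bound $3^i$ with no slack to spare. The only thing one has to notice is that even though the $\lambda_j$'s are required to be non-decreasing, the induction does not need that monotonicity — the bound on $\lambda_i$ depends only on the sum of the preceding parts, and the inductive hypothesis controls that sum directly.
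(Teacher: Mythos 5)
Your proof is correct and follows essentially the same route as the paper, which sketches exactly this induction (noting $\lambda_1 \leq 1+2(1)=3$, then $\lambda_2 \leq 1+2(1+3)=9$, and so on) using Lemma~\ref{lem:suff} together with the geometric sum $1+3+\cdots+3^{i-1}=\tfrac{1}{2}(3^i-1)$. Your write-up simply makes the inductive step explicit where the paper leaves it as ``follows by an inductive argument.''
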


This corollary follows by first noting that if $\lambda_0 = 1$ then $\lambda_1 \leq 1 + 2(1) = 3$. 
In turn, $\lambda_2 \leq 1 + 2 (1 + 3) = 9$ and the corollary now follows by an inductive 
argument. Now we come to the minimality condition of Bachet partitions. Corollary~\ref{cor:3s} 
implies that if $m = \lambda_0 + \lambda_1 + \cdots + \lambda_n$ is a Bachet partition then 
the sum of the parts in the partition cannot exceed $\sum_{i=0}^n{3^i} = \frac{1}{2}(3^{n+1}-1)$. 
That is, 
$$
m \leq \frac{1}{2}(3^{n+1}-1) < \frac{1}{2}3^{n+1} 
\, \, \, \, \textup{or} \, \, \, \,   
\textup{log}_3(2m) < n+1.
$$ 
Since $n+1$ is an integer then the integer part of $\textup{log}_3(2m)$ 
i.e.  $\lfloor \textup{log}_3(2m) \rfloor < n+1$ (the function $\lfloor x \rfloor$ 
takes a real number $x$ to the greatest integer that is less than or equal to $x$). 
Since both $\lfloor \textup{log}_3(2m) \rfloor$ and $n+1$ are integers then 
$\lfloor \textup{log}_3(2m) \rfloor \leq n$. 
In summary, Corollary~\ref{cor:3s} tells us that a Bachet partition must have 
{\em at least} $\lfloor \textup{log}_3(2m) \rfloor + 1$ parts. So if we could 
find a partition satisfying condition (1) with exactly 
$\lfloor \textup{log}_3(2m) \rfloor + 1$ parts then $\lfloor \textup{log}_3(2m) \rfloor + 1$ 
must be precisely the number of parts needed for a Bachet partition of $m$. 

But we do have such a partition! The elements of the multiset ${\mathcal W}_m$ from 
Proposition~\ref{pro:first} , reordered in increasing order and set equal (in order) 
to $\lambda_0$ through $\lambda_n$, 
make such a partition. For example, $25 = 1 + 3 + 9 + 12$ is a Bachet partition because 
of Proposition~\ref{pro:first} combined with Corollary~\ref{cor:3s}. 
\begin{theorem} \label{thm:min}
A Bachet partition of a positive integer $m$ has precisely 
$\lfloor \textup{log}_3(2m) \rfloor + 1$ parts.
\end{theorem}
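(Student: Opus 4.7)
The plan is simply to consolidate the two bounds that have already been laid out in the discussion preceding the theorem. By definition, a Bachet partition is a partition satisfying condition (1) with the minimum possible number of parts, and via the shift argument explained in the paper, any such partition is also $2$-complete. So the task reduces to showing (i) every $2$-complete partition of $m$ has at least $\lfloor \log_3(2m)\rfloor + 1$ parts, and (ii) there exists a $2$-complete partition of $m$ with exactly that many parts.

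For the lower bound (i), I would take any $2$-complete partition $m=\lambda_0+\lambda_1+\cdots+\lambda_n$ and invoke Corollary~\ref{cor:3s} term by term to get
\[
m \;=\; \sum_{i=0}^{n}\lambda_i \;\leq\; \sum_{i=0}^{n}3^i \;=\; \tfrac{1}{2}(3^{n+1}-1) \;<\; \tfrac{1}{2}\cdot 3^{n+1}.
\]
Rearranging yields $2m<3^{n+1}$, hence $\log_3(2m) < n+1$, and since $n+1$ is an integer this forces $\lfloor\log_3(2m)\rfloor \leq n$, so the partition has at least $\lfloor\log_3(2m)\rfloor + 1$ parts.

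For the upper bound (ii), I would invoke Proposition~\ref{pro:first}, which provides the explicit multiset $\mathcal{W}_m$ of size $n+1$ satisfying condition (1), where $n$ is defined by $\tfrac{1}{2}(3^n-1)+1 \leq m \leq \tfrac{1}{2}(3^{n+1}-1)$. The remaining bookkeeping is to verify that this $n+1$ equals $\lfloor\log_3(2m)\rfloor+1$: the defining inequalities for $n$ give $3^n+1 \leq 2m \leq 3^{n+1}-1$, hence $3^n < 2m < 3^{n+1}$, so $n < \log_3(2m) < n+1$ and therefore $\lfloor\log_3(2m)\rfloor = n$.

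Combining (i) and (ii), any Bachet partition has at most $n+1$ parts (since $\mathcal{W}_m$ achieves this) and at least $n+1$ parts (by the lower bound), so it has exactly $\lfloor\log_3(2m)\rfloor+1$ parts. There is no real obstacle here beyond checking that the two descriptions of $n$ — the integer satisfying $\tfrac{1}{2}(3^n-1)+1 \leq m \leq \tfrac{1}{2}(3^{n+1}-1)$ from Proposition~\ref{pro:first}, and $\lfloor\log_3(2m)\rfloor$ from the lower bound — agree; everything else follows immediately from results already in hand.
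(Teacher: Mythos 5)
Your proposal is correct and takes essentially the same route as the paper: the lower bound comes from Corollary~\ref{cor:3s} via $m \leq \frac{1}{2}(3^{n+1}-1)$, and the matching upper bound comes from the explicit partition built on ${\mathcal W}_m$ of Proposition~\ref{pro:first}. The only difference is that you spell out the bookkeeping showing the $n$ defined by $\frac{1}{2}(3^n-1)+1 \leq m \leq \frac{1}{2}(3^{n+1}-1)$ coincides with $\lfloor \log_3(2m)\rfloor$, a detail the paper leaves implicit.
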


This theorem was essentially stated in \cite{Par} and formally stated, including 
Proposition~\ref{pro:first}, by R{\o}dseth \cite[Lemma 3.2]{Rod} where Bachet 
partitions are called {\em minimal 2-complete partitions}. 
One might wonder next: are the Bachet partitions from Proposition~\ref{pro:first} the only 
Bachet partitions for each positive integer $m$~? In the case of $m = \frac{1}{2}(3^{n+1}-1)$ 
it is now easy to show that the answer is yes. To see this let 
$\frac{1}{2}(3^{n+1}-1) = \lambda_0 + \lambda_1 + \cdots + \lambda_n$ be a Bachet partition. 
If any of the $\lambda_j$'s were strictly less than $3^j$ then, from Corollary~\ref{cor:3s}, 
we would have $\frac{1}{2}(3^{n+1}-1) = \sum_{i = 0}^n{\lambda_i} < \frac{1}{2}(3^{n+1}-1)$ 
which cannot occur. Hence, as claimed by \cite[\S 9.7]{HarWri}, $1 + 3 + 3^2 + \cdots + 3^n$ 
is the unique Bachet partition for $m = \frac{1}{2}(3^{n+1}-1)$. 

In the next section, we will show that a partition is a Bachet partition if and only if 
it both has the number of parts as stated above and, amazingly, the conclusion of 
Lemma~\ref{lem:suff} is satisfied for all parts in the partition. 
But before doing so permit us to digress a little and say what was so enjoyable about this section: 
we discovered everything we needed to know about the number of parts needed for a Bachet partition 
by starting with a very simple collection of inequalities (Lemma~\ref{lem:suff}) and then we used 
a very generous version of these inequalities to attain $\lambda_i \leq 3^i$. When combined 
with Proposition~\ref{pro:first} we were able to solve the problem of the number of weights 
needed for the Bachet problem. 
We should know 
better but it is still surprising to attain meaningful, sharp results from languid 
inequalities such as those used in this section. 
See \cite{Steele} for a delightful, analysis-flavored account on all things being ``inequal''!

\section*{Bachet Partitions as Lattice Points in Polyhedra}

Recall our example of $25 = 1 + 3 + 9 + 12$ as a Bachet partition. In contrast 
to the scenario where $\frac{1}{2}(3^{n+1}-1) = 1 + 3 + 3^2 + \cdots + 3^n$ is a unique 
Bachet partition for that particular $m$, there are many (many being nine!) Bachet 
partitions for $m=25$: 
$$
\begin{array}{cccccccccc}
 25 & & = & 1+3+9+12 & & = & 1+3+8+13 & & = & 1+3+7+14 \,  \\
    & & = & 1+3+6+15 & & = & 1+3+5+16 & & = & 1+3+4+17 \,  \\
    & & = & 1+2+7+15 & & = & 1+2+6+16 & & = & 1+2+5+17 
\end{array}
$$
That these partitions are precisely the Bachet partitions for $25$ follow from this remarkable result: 
\begin{theorem} \label{thm:necc} \textup{(Park \cite[Theorem 2.2]{Par})} 
The partition $m=\lambda_0 + \lambda_1 + \lambda_2 + \cdots + \lambda_n$ is a Bachet 
partition if and only if $n = \lfloor \textup{log}_3(2m) \rfloor$, $\lambda_0 = 1$ 
and 
$\lambda_i \leq 1 + 2(\lambda_0 + \lambda_1 + \cdots + \lambda_{i-1})$ 
for every $i = 1,2,\ldots, n$.
\end{theorem}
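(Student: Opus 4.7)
The plan is to handle the two directions separately. For the forward direction, if $m = \lambda_0 + \lambda_1 + \cdots + \lambda_n$ is a Bachet partition then it is, by definition, $2$-complete, so Lemma~\ref{lem:suff} immediately supplies $\lambda_0 = 1$ together with the chain of inequalities $\lambda_i \leq 1 + 2(\lambda_0 + \cdots + \lambda_{i-1})$; meanwhile Theorem~\ref{thm:min} forces $n + 1 = \lfloor \textup{log}_3(2m) \rfloor + 1$. So this direction is essentially bookkeeping.

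For the converse, suppose all three conditions hold. Minimality of the number of parts comes for free from Theorem~\ref{thm:min}, since $n + 1$ is exactly the number of parts a Bachet partition must have, so the real task is to show $2$-completeness. I plan to prove, by induction on $n$, the following self-contained statement: \emph{if $\lambda_0 = 1$ and $\lambda_i \leq 1 + 2(\lambda_0 + \cdots + \lambda_{i-1})$ for every $i = 1, \ldots, n$, then every integer in $[0,\, 2(\lambda_0 + \cdots + \lambda_n)]$ is representable as $\sum_{i=0}^{n} \alpha_i \lambda_i$ with each $\alpha_i \in \{0,1,2\}$}. The base case $n = 0$ is trivial, since $\lambda_0 = 1$ gives $\{0,1,2\} = [0,2]$.

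For the inductive step, set $S_k := \lambda_0 + \cdots + \lambda_k$. The truncation $\lambda_0, \ldots, \lambda_{n-1}$ inherits the inductive hypotheses (the inequality for each $i \leq n-1$ is unaffected by dropping $\lambda_n$), so by induction every integer in $[0, 2S_{n-1}]$ has a $\{0,1,2\}$-representation using only $\lambda_0, \ldots, \lambda_{n-1}$. Splitting on the choice of $\alpha_n \in \{0,1,2\}$, these extend to cover the three shifted intervals
$$[0,\, 2S_{n-1}] \; \cup \; [\lambda_n,\, \lambda_n + 2S_{n-1}] \; \cup \; [2\lambda_n,\, 2\lambda_n + 2S_{n-1}].$$
For this union to equal $[0,\, 2S_n] = [0,\, 2\lambda_n + 2S_{n-1}]$, consecutive intervals must overlap or at least abut, and in each case the required condition is $\lambda_n \leq 1 + 2S_{n-1}$, which is precisely the hypothesis for $i = n$. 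This closes the induction and finishes the theorem.

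The main obstacle is modest: the interval-covering step is essentially the whole content, and the given inequality is cooked up exactly so that the three shifted copies of $[0, 2S_{n-1}]$ chain together with no gap. The one thing to be careful about is to avoid inadvertently assuming $n - 1 = \lfloor \textup{log}_3(2S_{n-1}) \rfloor$ in the inductive step, which need not hold — only $\lambda_0 = 1$ and the inequalities are inherited by the truncation, and they are exactly what the induction needs.
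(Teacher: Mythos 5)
Your proposal is correct and takes essentially the same route as the paper: the forward direction is dispatched by Lemma~\ref{lem:suff} and Theorem~\ref{thm:min}, and the converse is proved by induction on the number of parts, applying the inductive hypothesis to the truncated partition $\lambda_0,\ldots,\lambda_{n-1}$ and using $\lambda_n \leq 1 + 2(\lambda_0+\cdots+\lambda_{n-1})$ to leave no gap. Your three-shifted-intervals picture is just a rephrasing of the paper's explicit choice $\alpha_n = \left\lceil \frac{l - 2\sum_{j=0}^{n-1}\lambda_j}{\lambda_n} \right\rceil$, and your closing caution about not smuggling the $\lfloor \log_3(\cdot)\rfloor$ condition into the induction matches the paper's device of inducting on the sets $\Set_i$ rather than on Bachet partitions.
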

\begin{proof}
Due to Lemma~\ref{lem:suff} and Theorem~\ref{thm:min} all we need show is that 
if $\lambda_i \leq 1 + 2(\lambda_0 + \lambda_1 + \cdots + \lambda_{i-1})$ 
for every $i = 1,2,\ldots, n$ then $m=\lambda_0 + \lambda_1 + \lambda_2 + \cdots + \lambda_n$ 
is a $2$-complete partition. 
This will be carried out by induction on the number of parts in the partition. Let $\Set_n$ be the 
set of all partitions with $n+1$ parts that satisfy $\lambda_0 = 1$ and 
$\lambda_i \leq 1 + 2(\lambda_0 + \lambda_1 + \cdots + \lambda_{i-1})$ for every $i = 1,2,\ldots, n$. 

We will show that $\Set_n$ is contained in the set of $2$-complete partitions. 
Clearly this is true for $\Set_0 = \{ 1 \}$ and $\Set_1 = \{ 1+1, 1+2, 1+3 \}$. Assume it is so 
for all $\Set_i$'s where $i \leq n-1$. We will show that $\Set_n$ is contained in the 
set of $2$-complete partitions. 
Let $\lambda_0 + \lambda_1 + \lambda_2 + \cdots + \lambda_n$ be a fixed partition  
in $\Set_n$. Note that this implies that 
$\lambda_0 + \lambda_1 + \lambda_2 + \cdots + \lambda_{n-1}$ is in $\Set_{n-1}$ and so 
our inductive hypothesis tells us the following: every integer $l$ less than or equal 
to $2(\lambda_0 + \lambda_1 + \lambda_2 + \cdots + \lambda_{n-1})$ can be written 
as a $\{ 0,1,2 \}$-combination of $\lambda_0, \lambda_1, \lambda_2, \ldots, \lambda_{n-1}$. 

So we can assume from here that we fix $l \leq 2 \sum_{j=0}^{n}{\lambda_j}$ and $l > 2 \sum_{j=0}^{n-1}{\lambda_j}$. 
In this case there will always exist an $1 \leq \alpha_n \leq 2$ such that 
$
(\alpha_n - 1)\lambda_n + 2 \sum_{j=0}^{n-1}{\lambda_j} < l  \leq \alpha_n \lambda_n + 2 \sum_{j=0}^{n-1}{\lambda_j}
$ 
or $\alpha_n = \left\lceil \frac{l - 2 \sum_{j=0}^{n-1}{\lambda_j} }{\lambda_n} \right\rceil$. 
But since $l - \alpha_n \lambda_n \leq 2 \sum_{j=0}^{n-1}{\lambda_j}$ our inductive hypothesis tells us that 
$l - \alpha_n\lambda_n$ can be written as a $\{ 0,1,2 \}$-combination of $\lambda_0, \lambda_1, \lambda_2, \ldots, \lambda_{n-1}$ 
and so $l$ can be written as a $\{ 0,1,2 \}$-combination of $\lambda_0, \lambda_1, \lambda_2, \ldots, \lambda_{n-1}, \lambda_n$.
\end{proof}

One striking aspect of this inequality formulation of the Bachet partitions is that 
for each positive $m$ we can think of the Bachet partitions as the set of 
lattice points (points all of whose entries are integers) 
in the polyhedron in $\R^n$ defined by the inequalities of Theorem~\ref{thm:necc}. 
The nine Bachet partitions of $25$, written as $(\lambda_0, \lambda_1, \lambda_2, \lambda_3) \in \R^4$, 
sit in the (two-dimensional) plane living in $\R^4$ cut out by the equations 
$\lambda_0=1$ and $\lambda_0+\lambda_1+\lambda_2+\lambda_3 = 25$ and by the six additional 
{\em halfspaces} defined by the six 
inequalities: $\lambda_i \leq 1 + 2 (\lambda_0 + \cdots + \lambda_{i-1})$ for each $i = 1,2,3$ 
and $\lambda_0 \leq \lambda_1 \leq \lambda_2 \leq \lambda_3$. By {\em cut out} we really mean 
the region in $\R^4$ given by the intersection of the two three-dimensional planes and the 
six halfspaces: 
\begin{center}
\begin{picture}(0,0)%
\includegraphics{twoScale25.pstex}%
\end{picture}%
\setlength{\unitlength}{2486sp}%
\begingroup\makeatletter\ifx\SetFigFont\undefined%
\gdef\SetFigFont#1#2#3#4#5{%
  \reset@font\fontsize{#1}{#2pt}%
  \fontfamily{#3}\fontseries{#4}\fontshape{#5}%
  \selectfont}%
\fi\endgroup%
\begin{picture}(9892,3854)(675,-5733)
\put(3511,-5416){\makebox(0,0)[lb]{\smash{{\SetFigFont{7}{8.4}{\rmdefault}{\mddefault}{\updefault}{\color[rgb]{0,0,0}$\lambda_2 \leq 3 + 2\lambda_1$}%
}}}}
\put(8596,-2626){\makebox(0,0)[lb]{\smash{{\SetFigFont{7}{8.4}{\rmdefault}{\mddefault}{\updefault}{\color[rgb]{0,0,0}$\lambda_1 \geq 1$}%
}}}}
\put(2836,-4381){\makebox(0,0)[lb]{\smash{{\SetFigFont{7}{8.4}{\rmdefault}{\mddefault}{\updefault}{\color[rgb]{0,0,0}$\lambda_1 \leq 3$}%
}}}}
\put(7606,-5101){\makebox(0,0)[lb]{\smash{{\SetFigFont{7}{8.4}{\rmdefault}{\mddefault}{\updefault}{\color[rgb]{0,0,0}$\lambda_1 \leq \lambda_2$}%
}}}}
\put(1171,-2716){\makebox(0,0)[lb]{\smash{{\SetFigFont{7}{8.4}{\rmdefault}{\mddefault}{\updefault}{\color[rgb]{0,0,0}$\lambda_2 \leq \lambda_3$}%
}}}}
\put(4276,-2446){\makebox(0,0)[lb]{\smash{{\SetFigFont{7}{8.4}{\rmdefault}{\mddefault}{\updefault}{\color[rgb]{0,0,0}$\lambda_3 \leq 3+2\lambda_1 + 2\lambda_2$}%
}}}}
\end{picture}%

\end{center}
The first three inequalities (along with the two-dimensional 
plane) cut out the shaded triangle shown. 
In this case of $m=25$, the other three inequalities that define the ordering of 
the parts of the partitions are not needed -- they are said to be {\em redundant} -- as they do not contribute 
to the {\em cutting out} of the shaded triangle. The Bachet partitions of $25$, as expected from 
Theorem~\ref{thm:necc}, are precisely the integer points in the shaded region.

Another striking consequence of the inequality formulation is that 
every Bachet partition has an {\em hereditary property}: it can be both 
projected down to, and lifted up from, another Bachet partition. In the next sections 
we'll use this hereditary property to count the number of Bachet partitions for 
a given $m$ but in order to do so we must first talk about ternary partitions and 
generating functions.

\section*{Precursor to Counting: Ternary Partitions and Generating Functions}

There is a formula due to R{\o}dseth \cite[Theorem 2.1]{Rod} for counting 
precisely the number of distinct Bachet partitions for a given $m$. It is a 
{\em generating function} formula and 
a quick perusal of the encyclopedic \cite{And} should convince the reader that generating 
functions are {\em the} standard way of counting in the theory of partitions of integers. 
However, the full derivation \cite[\S 4]{Rod} of R{\o}dseth's formula is 
difficult and technical and is beyond the 
scope (and against the informal spirit) of this present article. 
But we will describe R{\o}dseth's formula nonetheless and justify it for a substantial 
number of cases. To describe it we'll first need to talk 
about {\em ternary partitions} and their generating function.
Recall that we can formally write the geometric series 
$1 + (x^t)^1 + (x^{t})^2 + (x^{t})^3 + (x^{t})^4 + \cdots$ as $\frac{1}{1-x^t}$. We define the 
generating function 
$$
F(x) := \sum_{k=0}^{\infty}{f(k)x^k} = \prod_{i=0}^{\infty}\frac{1}{1-x^{3^i}}.
$$ 
where $f(k)$ is understood as the coefficient of $x^k$ in the infinite product 
$$
(1 + x^1 + (x^1)^2 + (x^1)^3 + (x^1)^4 + \cdots)
(1 + x^3 + (x^3)^2 + (x^3)^3 + (x^3)^4 + \cdots)
(1 + x^9 + (x^9)^2 + (x^9)^3 + (x^9)^4 + \cdots)
\cdots
$$ 
The significance of the term {\em generating function} comes from each $f(k)$ counting the 
$k^{\textup{th}}$ instance of some combinatorial phenomenon; in this case, the number of 
partitions of $k$ into powers of $3$ (these are called {\em ternary partitions}).
For example, 
$f(15) = 9$ since there are precisely nine partitions of $15$ all of whose parts 
are powers of $3$: 
\begin{center}
$\overbrace{1+ \cdots +1}^{15 \, \textup{times}}$, 
$\overbrace{1+ \cdots +1}^{12 \, \textup{times}} + 3$, 
$\overbrace{1+ \cdots +1}^{9 \, \textup{times}} + 3+3$, 
$1+1+1+1+1+1+3+3+3$, 
$1+1+1+3+3+3+3$, 
$1+1+1+1+1+1+9$, $1+1+1+3+9$, $3+3+3+3+3$, $3+3+9$.
\end{center}
A contribution of ``$1$'' is made to the coefficient $f(15) = 9$ for each ternary partition of 
$15$. One such contribution would be given by the term $(x^1)^3 (x^3)^4 = x^3x^{12} = x^{15}$ 
which represents the ternary partition $15 = 1+1+1+3+3+3+3$ of three $1$'s and four $3$'s. 
By convention, $f(0)=1$.

The generating function $F(x)$ also satisfies the functional equation $F(x) = \frac{1}{(1-x)}F(x^3)$ 
or, \newline \noindent $(1-x)F(x) = F(x^3)$ and looking at the coefficient of $x^{3k}$ in this 
equation we attain a recurrence relation $f(3k) - f(3k-1) = f(k)$ or 
$$f(3k) = f(3k-1) + f(k).$$ 
Returning to our ternary partitions this recurrence should not be so surprising: it says that the ternary 
partitions of $3k$ can be made from those of $3k-1$ (all of these already contain at least two $1$'s 
as parts and so adding another part equal to $1$ gives all possible ternary partitions of $3k$ with some 
parts equal to $1$) and from those ternary partitions of $k$ (by multiplying all terms of these ternary 
partitions of $k$ by $3$ we get ternary partions of $3k$ with no parts equal to $1$). The recurrence relation 
$f(3k) = f(3k-1) + f(k)$ explains this manner of counting the ternary partitions of $3k$ in a concise 
and unfussy manner. 

In other words, the generating function $F(x)$ is not only an 
accounting mechanism for ternary partitions but we can also manipulate the properties of 
$F(x)$ to recover 
encoded information about the ternary partitions themselves. These are some of the reasons that generating 
function formulae are thought of as the most useful means of counting not only specific partitions of 
integers but other combinatorial phenomena. 
A wonderfully colorful yet precise introduction to 
generating functions in general is \cite{Concrete} and \cite{AndEri} introduces 
them in the context of partitions of integers.

Returning again to our recurrence relations for the ternary partitions, we can observe that 
that $f(3k) = f(3k+1) = f(3k+2)$ since the ternary partitions of $3k+1$ and $3k+2$ 
are those given by adding one and two extra parts equal to $1$ respectively to those of $3k$. 
We can thus generalize the recurrence relation $f(3k) = f(3k-1) + f(k)$ to 
$f(k) = f(k-3) + f(\lfloor \frac{k}{3} \rfloor)$. By the same recurrence we have 
$f(k-3) = f(k-6) + f(\lfloor \frac{k-3}{3} \rfloor) = f(k-6) + f(\lfloor \frac{k}{3} \rfloor -1)$ 
and repeating we have 
$$
f(k) = \sum_{i=0}^{\lfloor \frac{k}{3} \rfloor}{f(i)}
$$
with the initial condition of $f(0)=1$. As we would expect, this recurrence yields 
$f(2)=f(1)=f(0)=1$ and so $f(5)=f(4) = f(3) = f(1)+f(0)=2$ which yields 
$f(15) = 1+1+1+2+2+2 = 9$ as claimed from the generating function above.

\section*{Counting Bachet Partitions: Projecting Down and Lifting Up}

Let us now explain the link between Bachet partitions and ternary partitions. 
Letting $\textup{Bachet}(m)$ denote the set of Bachet partitions of $m$, 
R{\o}dseth's formula amounts to showing that 
$$|\textup{Bachet}(m)| = f(\frac{1}{2}(3^{n+1}-1)-m)$$ for essentially 
two-thirds of all positive integers $m$. For the other one-third, we will also 
describe what happens, in terms of counting lattice points in polyhedra.

We begin with two observations. 
The first is that we get Bachet partitions from $1+2+7+15$ by sequentially 
peeling off (projecting down) their largest parts:
$
1+2+7+15 \, \longrightarrow \, 1+2+7 \, \longrightarrow \, 1+2 \longrightarrow 1.
$
It's clear in this example that every peeling will project to a unique Bachet partition 
and this {\em hereditary property} is true in general: 
if $\lambda_0 + \lambda_1 + \lambda_2 + \cdots + \lambda_n$ is a Bachet 
partition then so is $\lambda_0 + \lambda_1 + \lambda_2 + \cdots + \lambda_j$ for 
every $j=0,1,\ldots, n-1$. The only tedious part of the proof of this claim is showing 
that $\frac{1}{2}(3^{j}-1) + 1 \leq \lambda_0 + \lambda_1 + \cdots + \lambda_j$.

Secondly, reversing the projection we see that $1+2+7$ could lift to $1+2+7+15$ 
but could also lift to a Bachet partition $1+2+7+16$ of $26$. But what we can 
say is the following: 
if $m^\prime = \lambda_0 + \lambda_1 + \cdots + \lambda_{n-1}$ is a Bachet partition of $m^\prime$ 
(implicit in this statement is that $n-1 = \lfloor \textup{log}_3(2m^\prime) \rfloor$) then 
we can extend it to a Bachet partition $\lambda_0 + \lambda_1 + \cdots + \lambda_{n-1} + (m - m^\prime)$
of a fixed $m$ if and only if 
\begin{center}
(i) $\lambda_{n-1} \leq m - m^\prime$, 
(ii) $m$ has the property that $n = \lfloor \textup{log}_3(2m) \rfloor$ \\ 
and (iii) $m - m^\prime \leq 1 + 2m^\prime$ or $\lceil \frac{m-1}{3} \rceil \leq m^\prime$.
\end{center}

In the case of the Bachet partitions for $m=25$ below, the boldfaced largest terms are peeled 
off to leave precisely the Bachet partitions for $m^\prime = 8,9,10,11,12$ and $13$ and, by the 
projections of the hereditary property, no other Bachet partitions can be built upon to provide 
Bachet partitions for $m=25$. 
$$
\begin{array}{cccccccccc}
 25 & & = & 1+3+9+{\bf 12} & & = & 1+3+8+{\bf 13} & & = & 1+3+7+{\bf 14} \,  \\
    & & = & 1+3+6+{\bf 15} & & = & 1+3+5+{\bf 16} & & = & 1+3+4+{\bf 17} \,  \\
    & & = & 1+2+7+{\bf 15} & & = & 1+2+6+{\bf 16} & & = & 1+2+5+{\bf 17} 
\end{array}
$$
In other words, 
$
|\textup{Bachet}(25)| \, = \, 
\sum_{m^\prime = 8}^{13}
{|\textup{Bachet}(m^\prime)|} = 9 = f(40-25) = f(15).
$
We claim that $|\textup{Bachet}(m)| = f(\frac{1}{2}(3^{n+1}-1)-m)$ holds whenever 
$m$ is sandwiched by 
$$
\frac{1}{2}(3^n -1) + 3^{n-1} \leq m \leq \frac{1}{2}(3^{n+1} -1).
$$

Let's refer to such $m$'s as simply being {\em sandwiched}. 
For sandwiched $m$'s, condition (ii) above is immediately taken care of. We already have 
$m^\prime \leq \frac{1}{2}(3^n-1)$ and $\lambda_{n-1} \leq 3^{n-1}$ which implies that 
$m - m^\prime \geq (\frac{1}{2}(3^n-1) + 3^{n-1}) - \frac{1}{2}(3^n-1) = 3^{n-1} \geq \lambda_{n-1}$ 
as needed for condition (i). And in order that condition (iii) is met we need simply to insist that those 
Bachet$(m^\prime)$'s that extend to Bachet partitions of $m$ are exactly those in the range 
$ 
\lceil \frac{m-1}{3} \rceil \leq 
m^\prime \leq 
\frac{1}{2}(3^n -1).
$
Hence
$$
\textup{Bachet}(m) 
\, = \, 
\bigcup_{m^\prime = \lceil \frac{m-1}{3} \rceil}^{\frac{1}{2}(3^n -1)}
{\textup{Bachet}(m^\prime)}.
$$

By the projecting and lifting of the hereditary property, each Bachet partition 
of $m^\prime$ is extended to a {\em unique} Bachet partition of $m$. 
Hence, 
the number of elements in the above union equals the sum of the number of elements 
in each $\textup{Bachet}(m^\prime)$ of that union. So whenever $m$ is sandwiched we have 
$$
|\textup{Bachet}(m)| 
\, = \, 
\left|
\bigcup_{m^\prime = \lceil \frac{m-1}{3} \rceil}^{\frac{1}{2}(3^n -1)}
{\textup{Bachet}(m^\prime)}
\right| 
\, = \, 
\sum_{m^\prime = \lceil \frac{m-1}{3} \rceil}^{\frac{1}{2}(3^n -1)}
{|\textup{Bachet}(m^\prime)|}.
$$

We are now ready to tie together Bachet partitions and ternary partitions. We claim that 
$f(\frac{1}{2}(3^{n+1}-1) - m) = |\textup{Bachet}(m)|$ for all sandwiched 
$m$'s. We do so once again by induction on $n = \lfloor \textup{log}_3(2m) \rfloor$. The claim holds 
for $n=1$ since $2=1+1; \, 3 = 1+2; \, 4= 1+3$ and also for all sandwiched $m$'s for $n=2$, 
as can be seen here:
\begin{center}
$
7  =  1+3+3  =  1+2+4  =  1+1+5; \, 
8  =  1+3+4  =  1+2+5; \, 
9  =  1+3+5  =  1+2+6; 
$

$
10  =  1+3+6  =  1+2+7; \, 
11  =  1+3+7; \, 
12  =  1+3+8; \, 
13  =  1+3+9.
$ 
\end{center}
So assume that $m$ is sandwiched with $n = \lfloor \textup{log}_3(2m) \rfloor$. We already know 
that 
$
|\textup{Bachet}(m)| \, = \, 
\sum_{m^\prime = \lceil \frac{m-1}{3} \rceil}^{\frac{1}{2}(3^n -1)}
{|\textup{Bachet}(m^\prime)|}
$
and we remark that every such $m^\prime$ in this summation is also sandwiched, but with 
$n-1 = \lfloor \textup{log}_3(2m^\prime) \rfloor$. Hence, by our inductive hypothesis, 
$f(\frac{1}{2}(3^{n}-1) - m^\prime) = |\textup{Bachet}(m^\prime)|$ and 
$$
|\textup{Bachet}(m)| \, = \, 
\sum_{m^\prime = \lceil \frac{m-1}{3} \rceil}^{\frac{1}{2}(3^n -1)}
{f(\frac{1}{2}(3^{n}-1) - m^\prime)}
\, = \, 
f(0)+f(1)+f(2)+ \cdots + f(\frac{1}{2}(3^{n}-1) - \left\lceil \frac{m-1}{3} \right\rceil).
$$
But the input of the last term 
$
\frac{1}{2}(3^{n}-1) - \lceil \frac{m-1}{3} \rceil
$
simplifies to 
$
\left\lfloor  \frac{ \frac{1}{2}(3^{n+1}-1) -m }{3} \right\rfloor
$ 
and so we have 
$$
|\textup{Bachet}(m)| \, = \, 
\sum_{i=0}^{\lfloor  \frac{ \frac{1}{2}(3^{n+1}-1) -m }{3} \rfloor}
{f(i)}
$$
This is exactly the recurrence relation, with the initial conditions still intact, 
that we had hoped to obtain. Hence when $m$ is sandwiched, the generating function for 
the Bachet partitions is exactly $F(x)$, the generating function for the ternary 
partitions.

We close this section by describing what happens for those $m$'s that are not sandwiched. 
Using the generating function $F(x)$ we can define another 
$$
G(x) :=  \sum_{k=0}^{\infty}{g(k)x^k} \, = \,  
\sum_{j=0}^{\infty} { \frac{x^{{3^j}-1}}{1 - x^{2\cdot 3^j}}F(x^{5 \cdot 3^j}) \prod_{i=0}^{j}\frac{1}{1-x^{3^i}}}.
$$ 
Then, adapting the convention that $g(k) = 0$ if $k$ is a negative integer, 
R{\o}dseth's formula claims that the number of Bachet partitions of $m$ equals 
$$
f(\frac{1}{2}(3^{n+1}-1) - m) - g( \frac{1}{2}(3^{n}-1) +3^{n-1} - 1 - m)
$$
Note that $m$ is sandwiched precisely when the input for $g(\cdot)$ is negative. 
An example of a non-sandwiched $m$ is $16$ and for this we have 
$|\textup{Bachet}(16)| =  f(24) - g(5) = 18 - 6$ (we only have to work 
out the first two parts $j=0,1$ of the infinite sum for $g(5)=6$). We can list the 
Bachet partitions for $16$, using Theorem~\ref{thm:necc}:
$$
\begin{array}{ccccccccc}
 16 & = & 1+3+3+9 \,& = & 1+3+4+8 \,& = & 1+3+5+7 \,& = & 1+3+6+6 \,  \\
    & = & 1+2+6+7 \,& = & 1+2+5+8 \,& = & 1+1+5+9 \,& = & 1+2+4+9 \,  \\
    & = & \, 1+1+4+10 & = & \, 1+2+3+10 & = & \, 1+2+2+11 & = & \, 1+1+3+11
\end{array}
$$
Sticking to our promise not to prove R{\o}dseth's formula we will instead present a 
polyhedral picture as to why the generating function $G(x)$ is needed when $m$ is not 
sandwiched. In a previous section we saw a figure showing the Bachet partitions for $m=25$ 
as lattice points in a triangle. A key observation here, and something that holds for 
all sandwiched $m$'s, is that the ordering inequalities 
$\lambda_0 \leq \lambda_1 \leq \cdots \leq \lambda_n$ were subsumed by the 
$\lambda_i \leq 1 + 2 (\lambda_0 + \cdots + \lambda_{i-1})$. This is a significant part of 
what made counting $\textup{Bachet}(m)$ for sandwiched $m$'s relatively straightforward: we 
get the ordering of the parts for free from the other inequalities! Not so when $m$ is not 
sandwiched and we can see this in the following figure for $m=16$:
\begin{center}
\begin{picture}(0,0)%
\includegraphics{twoScale16.pstex}%
\end{picture}%
\setlength{\unitlength}{1450sp}%
\begingroup\makeatletter\ifx\SetFigFont\undefined%
\gdef\SetFigFont#1#2#3#4#5{%
  \reset@font\fontsize{#1}{#2pt}%
  \fontfamily{#3}\fontseries{#4}\fontshape{#5}%
  \selectfont}%
\fi\endgroup%
\begin{picture}(20706,4923)(2821,-5494)
\put(12601,-2761){\makebox(0,0)[lb]{\smash{{\SetFigFont{20}{24.0}{\rmdefault}{\mddefault}{\updefault}{\color[rgb]{0,0,0}$-$}%
}}}}
\put(14986,-5416){\makebox(0,0)[lb]{\smash{{\SetFigFont{5}{6.0}{\rmdefault}{\mddefault}{\updefault}{\color[rgb]{0,0,0}$\lambda_2 \leq 3 + 2\lambda_1$}%
}}}}
\put(14311,-4381){\makebox(0,0)[lb]{\smash{{\SetFigFont{5}{6.0}{\rmdefault}{\mddefault}{\updefault}{\color[rgb]{0,0,0}$\lambda_1 \leq 3$}%
}}}}
\put(14041,-3886){\makebox(0,0)[lb]{\smash{{\SetFigFont{5}{6.0}{\rmdefault}{\mddefault}{\updefault}{\color[rgb]{0,0,0}$\lambda_1 \geq 1$}%
}}}}
\put(19531,-5056){\makebox(0,0)[lb]{\smash{{\SetFigFont{5}{6.0}{\rmdefault}{\mddefault}{\updefault}{\color[rgb]{0,0,0}$\lambda_1 \leq \lambda_2$}%
}}}}
\put(3511,-5416){\makebox(0,0)[lb]{\smash{{\SetFigFont{5}{6.0}{\rmdefault}{\mddefault}{\updefault}{\color[rgb]{0,0,0}$\lambda_2 \leq 3 + 2\lambda_1$}%
}}}}
\put(2836,-4381){\makebox(0,0)[lb]{\smash{{\SetFigFont{5}{6.0}{\rmdefault}{\mddefault}{\updefault}{\color[rgb]{0,0,0}$\lambda_1 \leq 3$}%
}}}}
\put(6211,-1231){\makebox(0,0)[lb]{\smash{{\SetFigFont{5}{6.0}{\rmdefault}{\mddefault}{\updefault}{\color[rgb]{0,0,0}$\lambda_3 \leq 3+2\lambda_1 + 2\lambda_2$}%
}}}}
\put(17686,-1231){\makebox(0,0)[lb]{\smash{{\SetFigFont{5}{6.0}{\rmdefault}{\mddefault}{\updefault}{\color[rgb]{0,0,0}$\lambda_3 \leq 3+2\lambda_1 + 2\lambda_2$}%
}}}}
\put(16561,-1726){\makebox(0,0)[lb]{\smash{{\SetFigFont{5}{6.0}{\rmdefault}{\mddefault}{\updefault}{\color[rgb]{0,0,0}$\lambda_2 \leq \lambda_3$}%
}}}}
\end{picture}%

\end{center}
The $18$ points on the left indicate the lattice points in the polyhedron defined by 
$\lambda_1 \leq 3, \, \lambda_2 \leq 3 + 2\lambda_1 \, \textup{\&} \,  \lambda_3 \leq 3 + 2\lambda_1 + 2 \lambda_2$ 
and living in the plane defined by $\lambda_0 = 1$ and $\lambda_0 + \lambda_1 + \lambda_2 + \lambda_3 = 16$. 
This polyhedron pays no regard to the ordering of the parts in the Bachet partitions. This is precisely what 
$f(40 - 24) = 18$ is counting. On the right we see the effect of including the inequalities that define the 
ordering on the parts $\lambda_0 \leq \lambda_1 \leq \lambda_2 \leq \lambda_3$: we need to subtract 
exactly $g(5) = 6$ lattice points from those on the left to get the count of 
$|\textup{Bachet}(16)| = 12$ just right! 

We motivated why R{\o}dseth's general formula works by explaining it in terms of counting lattice points 
in polyhedra. This would not be the usual approach in integer partitions owing to the fact that 
partitions can rarely be described in terms of lattice points in a polyhedron. Many of the wonderful 
results in integer partitions depend largely on the ability to manipulate generating functions 
in much the same way that we attained the functional equation $(1-x)F(x) = F(x^3)$. 
However, it is no 
fluke that the generating functions above counted the lattice points and this method of counting lattice 
points in a polyhedron (by generating functions) is one of the most beautiful and effective methods 
for solving the general problem of counting lattice points in polyhedra. The textbook of Beck \& Robins 
\cite{BecRob} provides a wonderful, accessible introduction to this problem and 
how it arises in many contexts like discrete geometry, 
number theory and combinatorics. 

\section*{Other Generalizations: One-Scale \& Error-Correcting Bachet's Problems}

There are two natural variants of Bachet's problem. The first is what if we are only allowed to 
place weights on one side of the scale pan. The second is that of discerning an integer value that is unknown. 
In other words: 

\noindent {\em The one-scale Bachet problem}: What is the least number of pound weights that can be 
used on a scale pan to weigh any integral number of pounds from 1 to m inclusive, if the weights 
can be placed in only one of the scale pans~?

\noindent {\em The error-correcting Bachet problem}: Given a fixed integer weight of unknown weight $l$, weighing 
no more than m pounds, what is the least number of pound weights that can be used on a scale pan to discern $l$'s 
value, if the weights can be placed in either of the scale pans~?

These, and the original Bachet problem, lead to the following definition \cite{BruO'Sh}
\begin{definition}
A partition $m = \lambda_0 + \lambda_1 + \cdots + \lambda_n$ with the parts in increasing order 
is an {\em $e$-relaxed $r$-complete partition} ({\em $(e,r)$-partition} for short) if 
no $e+1$ consecutive integers between $0$ and $rm$ are absent from the set 
$\{ \sum_{i=0}^{n} \alpha_i \lambda_i : \alpha_i \in \{ 0,1,\ldots,r \} \}$. We call the 
partition {\em minimal} if $n$ is as small as possible with this property.
\end{definition}

The original Bachet problem is the study of minimal $(0,2)$-complete partitions. The one-scale 
variant is that of minimal $(0,1)$-partitions. Note that for $m=15$, there is a unique solution 
given by $15 = 1+2+4+8$. There is also unique solution to the error-correcting 
variant for $m=80$ given by $2 + 6 + 18 + 54$. Note, for example, that if the weight we need to discern is 
$l = 5$ then we need not weigh $l=5$ precisely, only to observe that $l$ is heavier than $4 = 6-2$ and 
lighter than $6$. In other words, no two consecutive $l$'s are absent from the set of integers achievable 
with the parts of the partition using both of the scale pans. Thus the error-correcting Bachet problem as 
stated above is simply that of the minimal $(1,2)$-partitions. 

The problem of classifying and enumerating the minimal $(e,r)$-partitions are completely understood in 
much the same manner as we did for Bachet's problem. To begin with, for a minimal $(e,r)$-partition 
$m = \lambda_0 + \lambda_1 + \cdots + \lambda_n$ it's not too hard to see that we get 
$\lambda_0 \leq e+1$ and that $\lambda_i \leq (e+1) + r \sum_{j=0}^{i-1}{\lambda_j}$ for all $i \leq n$. 
While the error term $e$ does affect the precise count 
the minimal $(e,r)$-partitions, the $(r+1)$-ary partitions are still the dominant player for enumerating 
these partitions, with the error term making only a minor impact. As we might expect {\em $(r+1)$-ary partitions} 
are partitions of integers whose parts are powers of $r+1$.

The full story of these variants can be found as follows: 
the description of the one-scale problem was first described by Brown \cite{Bro} and extended to the 
$(0,r)$-partitions by Park \cite{Par}, who called them {\em minimal $r$-complete partitions}. The one-scale 
problem was enumerated for sandwiched $m$'s by binary partitions in \cite{Osh}, and 
R{\o}dseth \cite{RodMpart} extended this enumeration for all minimal $(0,1)$-partitions. R{\o}dseth went 
further and enumerated all $(0,r)$-partitions in \cite{Rod}. With Bruno, we extended all these arguments to the minimal 
$(e,r)$-partitions \cite{BruO'Sh}. Park's expressed motivation in \cite{Par} was to complement the 
{\em perfect partitions} of MacMahon from the 1880's, to which we turn our attention to next in our last section.

\section*{MacMahon's Perfect Partitions}

In 1886 Major Percy A. MacMahon \cite{MacQuar} \cite[pp. 217--223]{MacComb} 
proposed and solved an alternative generalization 
to Bachet's problem, which differs significantly from the generalization that we have 
investigated until now. It's appropriate too that we should include MacMahon's 
contribution to Bachet's problem since, as Gian-Carlo Rota persuasively argues in 
his introduction to MacMahon's collected papers \cite{MacColl}, MacMahon's 
substantial contributions to the foundation of modern combinatorics have not always 
been given their proper due. 

\parbox{1.7in}{\includegraphics[scale=0.2]{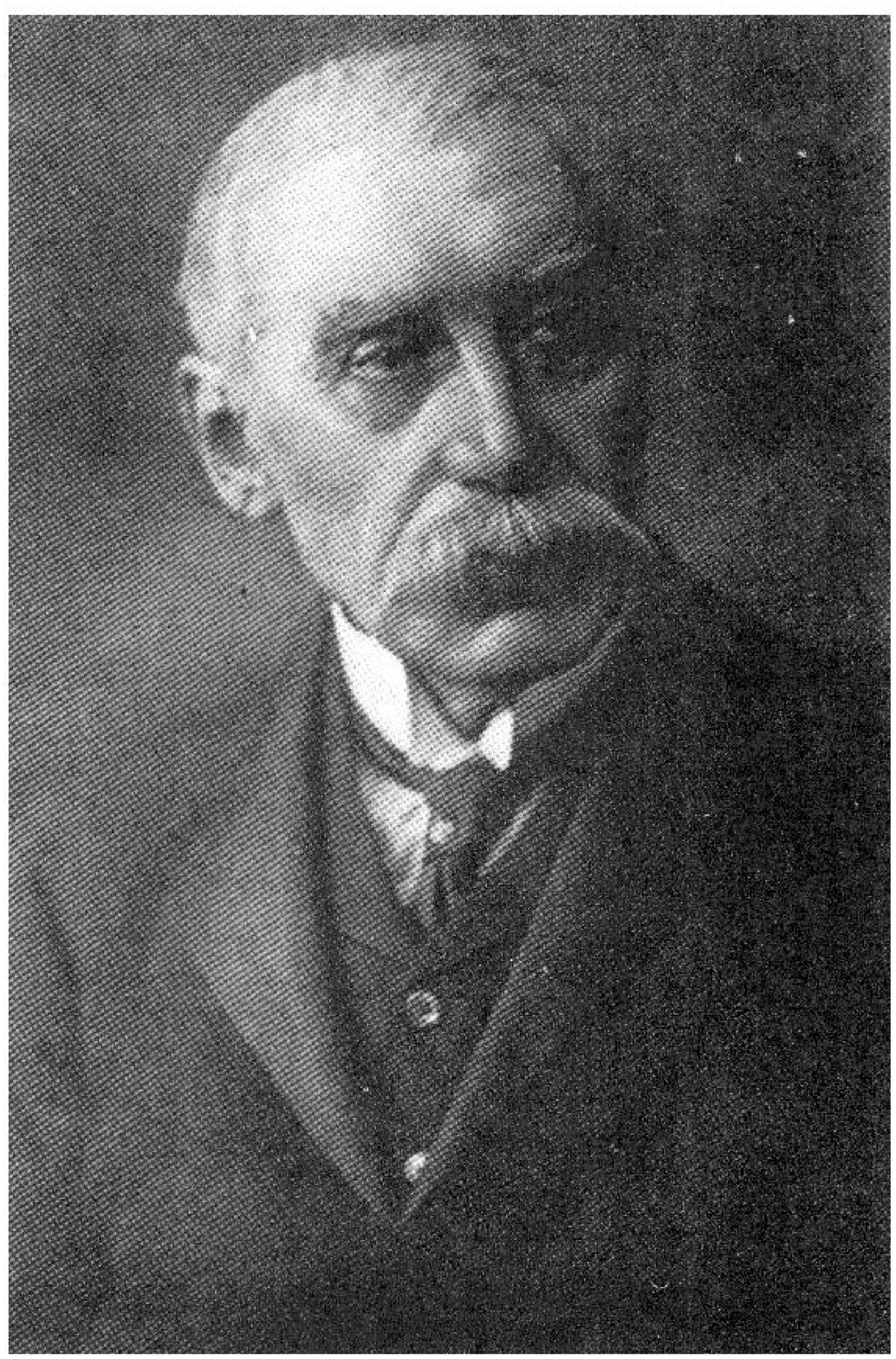}
}
\parbox{4.6in}{
MacMahon noted that the example of $40 = 1+3+9+27$ had the property that every integer 
weight $l$ between $1$ and $40$ can be weighed in a {\em unique} manner using the weights $1,3,9$ and $27$ 
on a two-scale pan. In other words, MacMahon discarded the {\em minimality of parts} condition 
that we focused on here and instead added the {\em uniqueness} condition. The set of all partitions for 
MacMahon's generalization of Bachet's problem for $40$ is thus:
$
\underbrace{1+1+ \cdots +1+1}_{40 \, \textup{times}}, \, 
\underbrace{1+1+ \cdots +1+1}_{13 \, \textup{times}} + 27 , \, 
1+ \underbrace{3+3+ \cdots +3+3}_{13 \, \textup{times}}, \, 
1+1+1+1+9+9+9+9, \, 1+1+1+1+9+27, \, 1+3+3+3+3+27, \, 1+3+9+9+9+9, \, 1+3+9+27.
$
For shorthand, we write these partitions respectively as 
}

$$
(1)^{40}, \, (1)^{13}+27, \, 1 + (13)^3, \, (1)^4+(9)^4, \, 
(1)^4+9+27, \, 1+(3)^4+27, \, 1+3+(9)^4, \, 1+3+9+27.
$$
Note that we view the repeated parts of such partitions as indistinguishable as in the 
case of $(1)^{13}+27$ we regard there being a unique expression for $4$ as $1+1+1+1$; 
not ${13}\choose{4}$ distinct expressions! Also note that it includes our unique Bachet 
partition for $m = 40$. To describe all such partitions it will be easier to begin 
by analyzing the one-scale analogue of MacMahon's generalization, from which the 
two-scale problem will follow immediately.

Starting with the one-scale problem, MacMahon called a partition 
$m = \lambda_0 + \lambda_1 + \cdots + \lambda_s$ {\em perfect} 
if every $l$ between $0$ and $m$ can be written uniquely 
as $l = \sum_{i = 0}^s{\alpha_i \lambda_i}$ for $\alpha_i \in \{ 0,1 \}$ 
(and with repeated parts regarded as indistinguishable as described above). For example, 
we have $8$ perfect partitions for $m=11$:
$$
(1)^{11}, \, (1)^5 + (6)^1, \, (1)^1+(2)^5, \, (1)^3+(4)^2, \, 
(1)^2+(3)^2, \, (1)^2+(3)^1+(6)^1, \, (1)^1+(2)^2+(6)^1, \, (1)^1+(2)^1+(4)^2
$$
MacMahon's insight was that the perfect partitions of $m$ are in bijection with the 
{\em ordered factorizations} of the integer $m+1$ -- the set of all possible 
factorizations (not including multiples of 1) of $m+1$ but where we account 
for order too. For example, the set of ordered factorizations for $12$ equals 
$$
12, \,\, 6 \times 2, \,\, 2 \times 6, \,\, 4 \times 3, \,\, 
3 \times 4, \,\, 3 \times 2 \times 2, \,\, 2 \times 3 \times 2, \,\, 2 \times 2 \times 3.
$$

\begin{theorem} \textup{(MacMahon \cite{MacQuar})} \label{thm:perfect}
The perfect partitions of $m$ are in bijection with the 
ordered factorizations of $m+1$.
\end{theorem}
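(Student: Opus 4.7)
The plan is to construct an explicit bijection: given a perfect partition, group equal parts and write it as $\lambda_1^{e_1} + \lambda_2^{e_2} + \cdots + \lambda_k^{e_k}$ with $\lambda_1 < \lambda_2 < \cdots < \lambda_k$, and send it to the ordered factorization $m+1 = (e_1+1)(e_2+1) \cdots (e_k+1)$. Conversely, given an ordered factorization $m+1 = f_1 f_2 \cdots f_k$ with each $f_i \geq 2$, build the partition with distinct values $\lambda_j := f_1 f_2 \cdots f_{j-1}$ (so $\lambda_1 = 1$) and multiplicities $e_j := f_j - 1$.

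First I would check each map is well-defined. For the forward direction, counting gives exactly $\prod(e_i+1)$ distinct $\{0,1,\ldots\}$-combinations of the parts that respect the multiplicities; for a perfect partition these all fall in and fill $\{0,1,\ldots,m\}$ with no repeats, forcing $\prod(e_i+1) = m+1$. For the inverse direction, telescoping gives $\sum_j (f_j-1)\,f_1 \cdots f_{j-1} = f_1 \cdots f_k - 1 = m$, and the mixed-radix expansion of each $l \in \{0,\ldots,m\}$ relative to the bases $f_1,\ldots,f_k$ supplies the unique representation, so the constructed partition is indeed perfect.

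The core of the argument is proving by induction on $j$ that in \emph{any} perfect partition the distinct values must satisfy $\lambda_j = N_j$, where $N_j := (e_1+1)(e_2+1) \cdots (e_{j-1}+1)$; this is what forces the forward and inverse maps to be mutual inverses. The base case $\lambda_1 = 1$ follows because $l=1$ must be representable. For the inductive step, the hypothesis yields that $\lambda_1,\ldots,\lambda_{j-1}$ (together with their multiplicities) uniquely represent exactly $\{0,1,\ldots,N_j - 1\}$. Representing $N_j$ itself requires a positive coefficient on some part $\geq \lambda_j$, which forces $\lambda_j \leq N_j$; and if $\lambda_j < N_j$ then $\lambda_j$ would admit both a representation using only smaller parts (since $\lambda_j \in \{0,\ldots,N_j-1\}$) and the representation as a single copy of $\lambda_j$, violating uniqueness. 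Hence $\lambda_j = N_j$, and the bijection is identified.

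The main obstacle will be executing this inductive step cleanly, since one has to juggle the coverage and uniqueness conditions simultaneously and track how the multiplicity $e_j$ at a given distinct value interacts with the yet-larger distinct values above it. Once the identity $\lambda_j = N_j$ is established, the composition of the two maps recovers the original perfect partition on one side and the original ordered factorization on the other, completing the bijection and hence the theorem.
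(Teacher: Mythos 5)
Your proposal is correct and takes essentially the same route as the paper: both directions rest on the explicit correspondence $f_1\times f_2\times\cdots\times f_k \leftrightarrow (1)^{f_1-1}+(f_1)^{f_2-1}+(f_1f_2)^{f_3-1}+\cdots$, with a telescoping/mixed-radix argument showing the constructed partition is perfect and an induction forcing $\lambda_j=(e_1+1)(e_2+1)\cdots(e_{j-1}+1)$ in any perfect partition (your trichotomy on $\lambda_j$ versus $N_j$ is exactly the paper's ``if $\lambda_1<g_1$\ldots if $\lambda_1>g_1$\ldots'' step). Your explicit counting identity $\prod_i(e_i+1)=m+1$ is a slightly tidier way to see that the forward map lands in factorizations of $m+1$, but the substance is identical.
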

\begin{proof}
Consider the ordered factorization $f_1 \times f_2 \times f_3 \times \cdots \times f_r$ of $m+1$. 
From this factorization consider the recursively defined partition of $m$:
$$
(1)^{f_1-1} + (f_1)^{f_2-1} 
+ (f_1 \cdot f_2)^{f_3-1} 
+ (f_1 \cdot f_2 \cdot f_3)^{f_4-1} 
+ 
\cdots 
+ 
(f_1 \cdot f_2 \cdots f_{i-1})^{f_{i}-1} 
+ 
\cdots 
+ 
(f_1 \cdot f_2 \cdots f_{r-1})^{f_r-1}
$$ 
This partition sums to $m$ since the parts form a telescopic sum 
which collapses to $f_1 \cdot f_2 \cdot f_3 \cdots f_r - 1$. 

To see that the partition is perfect note that the parts $(1)^{f_1-1}$ suffice 
to expresses every $1 \leq l_1 \leq f_1-1$. Since all other parts of the partition 
are larger than $f_1$ then the sum of $l_1$ $1$'s is the unique way to express $l_1$ 
using parts of the above partition. Next, each $l_2 = f_1, 2f_1, \ldots, (f_1-1)(f_2)$ 
can be expressed by the parts $(f_1)^{f_2-1}$. And, as argued above, using the 
parts $(1)^{f_1-1}$ we can express every integer less than or equal to 
$(f_1 \cdot f_2-1)$ uniquely as $l_1+l_2$ where $l_1$ and $l_2$ are one of those 
above. Since the other parts of the partition are larger than $f_1 \cdot f_2-1$ then these 
are the unique expressions for every integer less than or equal to $f_1 \cdot f_2-1$. Now 
repeat the argument; rigorously one needs to complete the argument by induction 
on the number of terms in the ordered factorization.

With much the same argument in mind, it's not too difficult to see that any perfect 
partition 
$m = (\lambda_0)^{g_1-1} + (\lambda_1)^{g_2-1} + (\lambda_2)^{g_3-1} + \cdots + (\lambda_{s-1})^{g_s-1}$ must provide an ordered factorization of $m+1$: there must be at least 
one part of the partition equal to $1$ since we must be able to express $1$ as a subsum of 
the parts of the partition. Hence $\lambda_0 = 1$. Next $g_1 = \lambda_1$: if $\lambda_1 < g_1$ 
then the integer $\lambda_1 = g_1 + (1)^{\lambda_1 - g_1} = (1)^\lambda_1$ which would upset 
the uniqueness property. And if $\lambda_1 > g_1$ then we would have no way of expressing $g_1$ 
as a subsum of the parts of the partition. Hence, $\lambda_1 = g_1$. Assuming then that 
$\lambda_1$ is repeated $g_2-1 \geq 1$ times we are then, by a similar argument, forced to have 
$\lambda_2 = g_1 \cdot g_2$ and we can repeat this argument to yield 
$\lambda_i = g_1 \cdot g_2 \cdots g_i$ and this perfect partition yields an ordered factorization 
of $m+1$ vis-a-vis $g_1 \times g_2 \times \cdots \times g_s$.
\end{proof}
The perfect partitions for $11$ and the unique factorizations of $12$ listed above are done 
so in the order of the bijection between the sets. For example 
$12 \leftrightarrow (1)^{12-1} = 1 + 1 + \cdots + 1$ and 
$2 \times 3 \times 2 \leftrightarrow (1)^{2-1} + (2)^{3-1} + (2 \cdot 3)^{2-1} = 1 + 2 + 2 + 3 + 3$. 
We can't say for certain but would be willing to wager that MacMahon's motivation for calling 
these partitions ``perfect'' would be that the confluence between factorizations 
and sums reminded him of a similar confluence seen in perfect numbers.

With this characterization of perfect partitions we can in turn solve the 
{\em two-scale} problem -- 
these are what MacMahon called {\em subperfect partitions}. MacMahon called a partition 
$m = \lambda_0 + \lambda_1 + \cdots + \lambda_s$ {\em subperfect} 
if every $l$ between $0$ and $m$ can be written uniquely 
as $l = \sum_{i = 0}^s{\alpha_i \lambda_i}$ for $\alpha_i \in \{ -1,0,1 \}$ 
(and with repeated parts regarded as indistinguishable as in the case of perfect partitions). 
\begin{theorem} \cite[\S 3]{MacQuar}
The ordered factorizations of $2m+1$ are in bijection with the subperfect partitions of $m$.
 \end{theorem}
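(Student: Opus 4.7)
The plan is to reduce the subperfect case to the perfect case via a two-step dictionary: first, reinterpret the $\{-1,0,1\}$-coefficients as $\{0,1,2\}$-coefficients by the same shift already used in passing from condition (1) to (1)$^\prime$; then trade a $\{0,1,2\}$-combination of the parts $\lambda_0,\ldots,\lambda_s$ for a $\{0,1\}$-combination of the doubled multiset $\lambda_0,\lambda_0,\lambda_1,\lambda_1,\ldots,\lambda_s,\lambda_s$. Under the convention that repeated parts are indistinguishable, choosing $\beta_i\in\{0,1,2\}$ is tantamount to choosing which $\beta_i$ of the two copies of $\lambda_i$ to include. This sets up a map
\[
\Phi\colon\bigl\{\text{subperfect partitions of }m\bigr\}\longrightarrow\bigl\{\text{perfect partitions of }2m\bigr\},
\]
sending $\lambda_0+\cdots+\lambda_s$ to the partition of $2m$ in which each part is doubled.

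First I would check that $\Phi$ is well-defined. Given a subperfect partition, every $l\in[-m,m]$ is expressible uniquely as $\sum\alpha_i\lambda_i$ with $\alpha_i\in\{-1,0,1\}$; shifting $l\mapsto l+m$ and $\alpha_i\mapsto\alpha_i+1=:\beta_i$ gives a unique $\{0,1,2\}$-expression of every $l'\in[0,2m]$, and then the doubling just explained converts this into a unique $\{0,1\}$-expression in the doubled multiset. Thus $\Phi(\lambda_0+\cdots+\lambda_s)$ is indeed a perfect partition of $2m$. Injectivity of $\Phi$ is immediate since halving multiplicities recovers the $\lambda_i$'s.

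Next I would characterize the image of $\Phi$: it consists of exactly those perfect partitions of $2m$ in which every distinct part appears with even multiplicity. By Theorem~\ref{thm:perfect}, a perfect partition of $2m$ has the form
\[
(1)^{g_1-1}+(g_1)^{g_2-1}+(g_1g_2)^{g_3-1}+\cdots+(g_1g_2\cdots g_{r-1})^{g_r-1}
\]
corresponding to an ordered factorization $g_1\times g_2\times\cdots\times g_r$ of $2m+1$. Each multiplicity $g_i-1$ is even precisely when each $g_i$ is odd. Here is the decisive (and essentially free) observation: because $2m+1$ is odd, every ordered factorization of $2m+1$ has all odd factors, so the evenness condition is \emph{automatic}. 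Consequently $\Phi$ is a bijection between subperfect partitions of $m$ and all perfect partitions of $2m$, and composing with MacMahon's bijection from Theorem~\ref{thm:perfect} yields the desired bijection with ordered factorizations of $2m+1$.

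The main obstacle is the bookkeeping around indistinguishability of repeated parts: one must verify that the unique $\{0,1,2\}$-expression of each $l'\in[0,2m]$ in $\lambda_0,\ldots,\lambda_s$ matches, under the doubling, a unique $\{0,1\}$-expression in the doubled multiset, where ``unique'' is read with the two copies of each $\lambda_i$ treated as the same symbol. Once that is pinned down carefully (mirroring how MacMahon's theorem already handles indistinguishability), the rest of the argument is a clean composition of bijections, and the arithmetic coincidence that $2m+1$ is always odd does all the heavy lifting.
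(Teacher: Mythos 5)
\begin{remark}
Your proposal is correct and follows essentially the same route as the paper: both arguments pass through perfect partitions of $2m$, use the shift between $\{-1,0,1\}$- and $\{0,1,2\}$-coefficients together with the doubling/halving of multiplicities, and hinge on the observation that $2m+1$ odd forces every factor $f_i$ to be odd, hence every multiplicity $f_i-1$ to be even. The only difference is one of direction --- you map subperfect partitions of $m$ forward into perfect partitions of $2m$ and identify the image, whereas the paper starts from an ordered factorization and halves --- which if anything makes the surjectivity half of the bijection slightly more explicit than the paper's version.
\end{remark}
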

\begin{proof}
Consider any ordered factorization of $2m+1 = f_1 \times f_2 \times f_3 \times \cdots \times f_r$. Since $2m+1$ 
is odd then each $f_i \geq 3$ and must also be odd. In turn, each $f_i-1 \geq 2$ and is even. 
From Theorem~\ref{thm:perfect} we have a perfect partition of $2m$ given by 
$$
(1)^{f_1-1} + (f_1)^{f_2-1} 
+ (f_1 \cdot f_2)^{f_3-1} 
+ (f_1 \cdot f_2 \cdot f_3)^{f_4-1} 
+ 
\cdots 
+ 
(f_1 \cdot f_2 \cdots f_{i-1})^{f_{i}-1} 
+ 
\cdots 
+ 
(f_1 \cdot f_2 \cdots f_{r-1})^{f_r-1}
$$ 
By definition, every $l$ between $0$ and $2m$ can be expressed in a unique 
way as a subsum of these parts. In other words, as a $\{ 0,1 \}$-combination 
of the parts of this perfect partition. However, each one of the $f_i -1$'s are even 
and so every indistinguishable part appears an even number of times in the partition 
and so we can rephrase the above partition being a perfect partition for $2m$ as 
$$
(1)^{\frac{f_1-1}{2}} + (f_1)^{\frac{f_2-1}{2}} 
+ (f_1 \cdot f_2)^{\frac{f_3-1}{2}} 
+ (f_1 \cdot f_2 \cdot f_3)^{\frac{f_4-1}{2}} 
+ 
\cdots 
+ 
(f_1 \cdot f_2 \cdots f_{i-1})^{\frac{f_{i}-1}{2}} 
+ 
\cdots 
+ 
(f_1 \cdot f_2 \cdots f_{r-1})^{\frac{f_r-1}{2}}
$$
is a 2-complete partition of $m$ with MacMahon's uniqueness property preserved. But as 
we have noted in earlier sections, 2-complete partitions are exactly the Bachet 
partitions without the minimality of parts constraint. Since the uniqueness property 
is also preserved then we have shown that the subperfect partitions of $m$ are given 
precisely by the ordered factorizations of $2m+1$.
\end{proof}
The eight subperfect partitions of $40$ that we opened this section with 
are attained respectively from the ordered factorizations of $81$:
$$
81, \, 27 \times 3, \, 3 \times 27, \, 9 \times 9 , \, 
9 \times 3 \times 3, \, 3 \times 9 \times 3, \, 3 \times 3 \times 9, \, 3 \times 3 \times 3 \times 3.
$$ 

All in all, we get the MacMahon's two-scale problem for (almost) free from the one-scale 
problem and the connection to between (additive) partitions and (multiplicative) factorizations is 
surprising and satisfying. As claimed in the introduction, the original Bachet partition of 
$40 = 1+3+9+27$ comes from the factorization $3 \times 3 \times 3 \times 3$ of $81$.

Permit us to finish this article not so much with a criticism of Bachet partitions but by expressing 
a yearning. While the Bachet partitions span from a highly accessible problem with their general solutions 
both elegant and succinct, we know of no other connection to other seemingly unrelated 
areas of mathematics, even on the basic level like that which takes place between perfect 
partitions and factorizations. They do however enjoy a distinguishing feature that fits into a theme 
that currently enjoys some prominence: Bachet partitions can be described in terms 
of inequalities on the parts, a feature shared by the {\em lecture hall partitions} of 
Bousquet-M{\' e}lou \& Eriksson \cite{BosEri} and by the {\em symmetrically constrained compositions} 
of \cite{BecGesLeeSav}. Let us hope for further developments on this theme of polyhedral descriptions of 
integer partitions!

\section*{Acknowledgements} 
This work was carried out while I was visiting NUI Galway for the 2009--2010 academic year.
Thanks to Jorge Bruno, Graham Ellis, David Quinn and Jerome Sheahan of Galway, for allowing me to regale 
them with all things Bachet during that time.

\end{document}